\newtheorem{theorem}{Theorem}[section]
\newtheorem{lemma}[theorem]{Lemma}
\newtheorem{proposition}[theorem]{Proposition}
\numberwithin{equation}{section}
\begin{document}
\baselineskip=15pt

\title[Principal bundles over finite fields]{Principal
bundles over finite fields}

\author[I. Biswas]{Indranil Biswas}

\address{School of Mathematics, Tata Institute of Fundamental
Research, Homi Bhabha Road, Bombay 400005, India}

\email{indranil@math.tifr.res.in}

\author[S. Subramanian]{S. Subramanian}

\address{School of Mathematics, Tata Institute of Fundamental
Research, Homi Bhabha Road, Bombay 400005, India}

\email{subramnn@math.tifr.res.in}

\subjclass[2000]{14L15, 14F05}

\keywords{Finite fields, fundamental group scheme, ample line
bundle, principal bundle}

\date{}

\begin{abstract}
Let $M$ be an irreducible smooth projective variety defined
over $\overline{{\mathbb F}_p}$. Let $\varpi(M, x_0)$ be the
fundamental group scheme of $M$ with respect
to a base point $x_0$. Let $G$ be a connected
semisimple linear algebraic group over $\overline{{\mathbb F}_p}$.
Fix a parabolic subgroup $P\, \subsetneq\, G$, and also fix
a strictly anti--dominant character $\chi$ of $P$.
Let $E_G\, \longrightarrow\, M$ be a principal $G$--bundle
such that the associated line bundle $E_G(\chi)\, \longrightarrow\,
E_G/P$ is numerically effective.
We prove that $E_G$ is given by a homomorphism
$\varpi(M, x_0)\,\longrightarrow\, G$. As a consequence,
there is no principal $G$--bundle $E_G\, \longrightarrow\, M$
such that $\text{degree}(\varphi^*E_G(\chi)) \, >\, 0$
for every pair $(Y\, ,\varphi)$,
where $Y$ is an irreducible smooth projective curve, and
$\varphi: Y\longrightarrow E_G/P$ is a nonconstant morphism.
\end{abstract}

\maketitle

\section{Introduction}\label{sec1}

We recall a question of S. Keel: Let $X$ be a smooth projective
surface over $\overline{{\mathbb F}_p}$ such that $L.C\, >\, 0$
for every complete curve $C$. The question of Keel asks whether
$L$ is ample. (See \cite[p. 3959, Question 0.9]{Ke}.)

If the base field is complex numbers, then this question has a
negative answer as shown by Mumford \cite[p. 56, Example 10.6]{Ha1}.
Further examples were constructed in \cite{Su1} and \cite{MS} for
higher dimensional complex projective varieties and also for varieties 
defined over fields of positive characteristics (these fields are
not countable).

The varieties in all these examples are total spaces of flag bundles 
associated to vector bundles on curves, and the line bundles are
the naturally associated ones. Proposition \ref{prop0}
shows that these type of varieties and line 
bundles never produce examples that would give a negative answer
to the question of Keel.

Let $M$ be an irreducible smooth projective variety defined
over $\overline{{\mathbb F}_p}$.
Let $\varpi(M,\, x_0)$ be the fundamental group scheme
with respect to a base point $x_0\,\in\, M$.
The fundamental group scheme was introduced by
Nori \cite{No1}, \cite{No2}. Let $G$ be a connected
semisimple linear algebraic group over $\overline{{\mathbb F}_p}$.
Fix a parabolic proper subgroup $P\, \subset\, G$, and also fix
a strictly anti--dominant character $\chi$ of $P$.
Given a principal $G$--bundle $E_G\, \longrightarrow\, M$,
the quotient map $E_G\, \longrightarrow\, E_G/P$ defines
a principal $P$--bundle. Let $E_G(\chi)\, \longrightarrow\,
E_G/P$ be the line bundle associated to this principal $P$--bundle
for the character $\chi$.

We prove the following theorem (see Theorem \ref{thm1}):

\begin{theorem}\label{thm0}
Let $E_G$ be a principal $G$--bundle over $M$
such that the line bundle
$$
E_G(\chi)\,\longrightarrow\, E_G/P
$$
is numerically effective.
Then $E_G$ is given by a homomorphism
$\varpi(M, x_0)\,\longrightarrow\, G$.
\end{theorem}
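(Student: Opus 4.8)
The plan is to reduce the statement to a numerical-flatness property of associated vector bundles, which over $\overline{\mathbb F_p}$ is equivalent to coming from the fundamental group scheme. Fix a faithful representation $\rho\colon G\hookrightarrow \mathrm{GL}(V)$ and write $E_G(V)$ for the associated vector bundle. Since $G$ is semisimple, every irreducible representation is a subquotient of tensor powers of $V\oplus V^{*}$, and the essentially finite bundles on $M$ form an abelian tensor category closed under duals, tensor products and subquotients; hence, by Nori's Tannakian description of $\varpi(M,x_0)$, it suffices to prove that the single bundle $E_G(V)$ is essentially finite. Over $\overline{\mathbb F_p}$ a vector bundle is essentially finite if and only if it is numerically flat, i.e. both it and its dual are numerically effective. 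Thus the theorem will follow once I show that $E_G(V)$ and $E_G(V^{*})=E_G(V)^{*}$ are both nef on $M$.

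Next I would test this nefness on curves. A vector bundle $W$ on $M$ is nef precisely when $f^{*}W$ is nef on $Y$ for every smooth projective curve $Y$ and every morphism $f\colon Y\to M$, since nefness of $W$ means nefness of $\mathcal O_{\mathbb P(W)}(1)$, a condition checked on curves in $\mathbb P(W)$, and every such curve lies over a curve (or a point) of $M$. For a fixed $f$, pulling back the $G$-bundle gives a $G$-bundle $f^{*}E_G$ on $Y$ with $(f^{*}E_G)/P=f^{*}(E_G/P)$ and $(f^{*}E_G)(\chi)=f^{*}\big(E_G(\chi)\big)$; as the pullback of a nef line bundle is nef, the hypothesis descends to every such curve. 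So the whole problem is reduced to the case in which $M$ is a smooth projective curve.

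On a curve the argument runs through strong semistability. The key claim is that nefness of $E_G(\chi)$ on $E_G/P$, for the strictly anti-dominant (hence fibrewise ample) character $\chi$, forces $E_G$ to be semistable in the sense of Ramanathan: were $E_G$ unstable, its canonical reduction to a parabolic would produce, using the regularity of $\chi$, a curve inside $E_G/P$ on which $E_G(\chi)$ has negative degree, contradicting nefness. Moreover nefness is preserved by the Frobenius morphism of $Y$, and each $(F^{n\,*}E_G)(\chi)=F^{n\,*}(E_G(\chi))$ is again nef, so every Frobenius pullback of $E_G$ is semistable; that is, $E_G$ is strongly semistable. Since $G$ is semisimple and $f^{*}E_G$ is strongly semistable, every associated vector bundle is strongly semistable of degree zero (here one invokes that associated bundles of strongly semistable bundles are again strongly semistable in positive characteristic), hence numerically flat over $\overline{\mathbb F_p}$; in particular $f^{*}E_G(V)$ and $f^{*}E_G(V^{*})$ are nef on $Y$.

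Assembling these conclusions over all curves $f\colon Y\to M$ shows that $E_G(V)$ and its dual are nef on $M$, whence $E_G(V)$ is numerically flat and therefore essentially finite; by the Tannakian reduction of the first step this exhibits $E_G$ as coming from a homomorphism $\varpi(M,x_0)\to G$. The main obstacle is the curve case, and within it the implication that nefness of $E_G(\chi)$ for a single strictly anti-dominant character already yields Ramanathan semistability; the passage to strong semistability is then formal via Frobenius invariance of nefness, and the globalization from curves back to $M$ is handled by the numerical-flatness criterion.
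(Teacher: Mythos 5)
Your proposal is correct in outline and, on the curve level, runs exactly parallel to the paper: both arguments pull back to an arbitrary curve, invoke the Biswas--Parameswaran criterion that nefness of $E_G(\chi)$ for a strictly anti-dominant $\chi$ forces semistability, and upgrade to strong semistability via Frobenius-invariance of nefness (the paper's Lemma \ref{lem0}). Where you genuinely diverge is in the globalization step. The paper stays with the principal bundle: it proves directly that $[c_2({\rm ad}(E_G))\cdot c_1({\mathcal O}_M(1))^{d-2}]=0$ (Proposition \ref{prop1}, by playing Langer's Bogomolov inequality $[c_2(W)]\geq 0$ against the Grothendieck relation $[f^*c_2(W)\cdot c_1(L)^{r-2}]+[c_1(L)^r]=0$ for the nef tautological class $L$ on ${\mathbb P}(W)$), and then quotes the principal-bundle theorem of \cite{Bi}. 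You instead pass to a faithful representation $V$ and invoke, as a black box, the equivalence ``numerically flat $\Leftrightarrow$ essentially finite over $\overline{{\mathbb F}_p}$'' together with a Tannakian reduction. That equivalence is a true theorem, but be aware that in dimension $\geq 2$ it is \emph{not} a formal consequence of the curve case you establish: its proof requires showing that a numerically flat bundle has vanishing discriminant against $c_1({\mathcal O}_M(1))^{d-2}$, which is precisely the Bogomolov-plus-nefness argument of Proposition \ref{prop1}, followed by the theorem of \cite{Bi} (or \cite{Su2} on curves). So your black box contains, rather than avoids, the paper's main technical step; if you cannot cite it as established, you must reprove Proposition \ref{prop1}. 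Two smaller points to tighten: in characteristic $p$ representations of a semisimple $G$ are not completely reducible, so in the Tannakian reduction you should realize $E_G(W)$, for $W$ a subquotient of $V^{\otimes a}\otimes (V^*)^{\otimes b}$, as a sub/quotient object inside Nori's abelian category of Nori-semistable bundles (using that all associated bundles of a strongly semistable $G$-bundle are Nori-semistable) rather than as a direct summand; and your one-line sketch of the instability argument behind the semistability criterion is really a citation of \cite[Theorem 3.1]{BP}, which is how the paper handles it. The trade-off: your route is conceptually uniform (everything is reduced to numerical flatness of one vector bundle), while the paper's route is self-contained at exactly the point where yours relies on an external equivalence.
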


The following proposition is proved using Theorem \ref{thm0}
(see Proposition \ref{prop3}):

\begin{proposition}\label{prop0}
There is no principal $G$--bundle $E_G$ over $M$
such that the line bundle $E_G(\chi)\, \longrightarrow\,
E_G/P$ has the following property: for every pair of the
form $(Y\, ,\varphi)$,
where $Y$ is an irreducible smooth projective curve, and
$\varphi\,:\, Y\,\longrightarrow\, E_G/P$
is a nonconstant morphism, the inequality
$$
{\rm degree}(\varphi^*E_G(\chi)) \, >\, 0
$$
holds.
\end{proposition}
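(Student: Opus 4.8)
The plan is to argue by contradiction, feeding the hypothesis into Theorem \ref{thm0}. Suppose a principal $G$--bundle $E_G$ over $M$ with the stated property exists. Every integral curve in $E_G/P$ is the image of a nonconstant morphism from its normalization, so the strict inequality ${\rm degree}(\varphi^*E_G(\chi))\,>\,0$ forces, a fortiori, ${\rm degree}(\varphi^*E_G(\chi))\,\geq\, 0$ for all nonconstant $\varphi$; hence $E_G(\chi)$ is numerically effective. Theorem \ref{thm0} then yields a homomorphism $\rho\,:\,\varpi(M,x_0)\,\longrightarrow\, G$ giving $E_G$, that is, $E_G\,\cong\,\widehat{M}\times^{\varpi(M,x_0)}G$, where $\widehat{M}\,\longrightarrow\, M$ is the universal torsor.

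Next I would exploit that $\varpi(M,x_0)$ is a projective limit of finite group schemes while $G$ is affine of finite type over $\overline{{\mathbb F}_p}$; consequently $\rho$ factors through a finite quotient, so there are a finite group scheme $N$, an $N$--torsor $q\,:\,E_N\,\longrightarrow\, M$, and a homomorphism $N\,\hookrightarrow\, G$ with $E_G\,\cong\, E_N\times^N G$. The key point is that pulling a torsor back along itself trivializes it: $q^*E_N\,\cong\, E_N\times N$, whence $q^*E_G\,\cong\, E_N\times G$ is the trivial $G$--bundle over $E_N$. Passing to the quotient by $P$ gives $q^*(E_G/P)\,\cong\, E_N\times (G/P)$, and under this identification the pulled-back line bundle is ${\rm pr}_2^*L_\chi$, where $L_\chi\,\longrightarrow\, G/P$ is the homogeneous line bundle attached to $\chi$. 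In particular, for any point $x\,\in\, G/P$ the constant section $s_x\,:\,E_N\,\longrightarrow\, E_N\times(G/P)$, $e\,\longmapsto\,(e,x)$, composed with the (finite) base--change projection $Q\,:\,q^*(E_G/P)\,\longrightarrow\, E_G/P$, produces a morphism $\psi_x\,:\,E_N\,\longrightarrow\, E_G/P$ with $\psi_x^*E_G(\chi)\,\cong\,\mathcal{O}_{E_N}$, since ${\rm pr}_2\circ s_x$ is constant.

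Finally I would produce the offending curve. As $E_N$ is finite flat over $M$ it is projective of dimension $\dim M\,\geq\, 1$, so its reduction contains an integral curve; normalizing it gives a smooth projective curve $Y$ with a nonconstant morphism $g\,:\,Y\,\longrightarrow\, E_N$. Set $\varphi\,=\,\psi_x\circ g\,:\,Y\,\longrightarrow\, E_G/P$. Since $s_x$ is a closed immersion and $Q$ is finite (hence contracts no curve), $\varphi$ is nonconstant; yet $\varphi^*E_G(\chi)\,\cong\, g^*\psi_x^*E_G(\chi)\,\cong\,\mathcal{O}_Y$ has degree $0$, contradicting ${\rm degree}(\varphi^*E_G(\chi))\,>\,0$. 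This rules out such an $E_G$. The main obstacle is the structural input of the middle paragraph: verifying that a bundle coming from $\varpi(M,x_0)$ is trivialized by a finite torsor and that the resulting constant sections carry $E_G(\chi)$ to a degree--zero bundle. Once this is in place the positivity hypothesis is contradicted at once; note that the argument never uses the anti-dominance of $\chi$, only the fact that $E_G$ arises from the fundamental group scheme.
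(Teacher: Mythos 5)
Your proof is correct, and it follows the same overall skeleton as the paper's (deduce nefness, invoke Theorem \ref{thm0}, trivialize $E_G$ after a finite surjective base change, then contradict strict positivity via a constant section of the trivialized $G/P$--bundle), but the middle step is carried out by a genuinely different mechanism. The paper fixes a faithful representation $\eta\colon G\hookrightarrow \mathrm{GL}(V_0)$, notes that $E_G(V_0)$ is essentially finite, invokes the Biswas--Holla result to get a finite morphism $f\colon\widetilde{M}\to M$ from an irreducible \emph{smooth} projective variety trivializing $f^*E_G(V_0)$, and then uses affineness of $\mathrm{GL}(V_0)/G$ (semisimplicity of $G$) to upgrade this to a trivialization of the principal $G$--bundle $f^*E_G$ itself. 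You instead factor $\rho$ through a finite quotient group scheme $N$ of $\varpi(M,x_0)$ and pull back along the $N$--torsor $E_N\to M$, which trivializes tautologically via the diagonal section; this avoids citing [BH] and the affineness argument and is more self-contained, at the cost that $E_N$ may be non-reduced or reducible --- a point you correctly handle by passing to an integral curve in the reduction. (Two small remarks: the induced map $N\to G$ need not be injective as you wrote, but replacing $N$ by its image is harmless; and your final step is actually spelled out more carefully than the paper's, which produces a morphism from the $d$--dimensional $\widetilde{M}$ and leaves implicit the restriction to a curve and the verification that the composite is nonconstant, exactly the finiteness argument you make explicit.) Your closing observation that anti-dominance of $\chi$ enters only through Theorem \ref{thm0} is accurate and matches the paper.
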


\section{Preliminaries}\label{sec2}

Fix a prime $p$.
Let $M$ be an irreducible smooth projective variety
of dimension $d$ defined over $\overline{{\mathbb F}_p}$. Let
$$
F_M\, :\, M\, \longrightarrow\, M
$$
be the absolute Frobenius morphism of $M$. For any integer
$n\, \geq\, 1$, let
$$
F^n_M \, :=\, \overbrace{F_M\circ\cdots\circ F_M}^{n\mbox{-}
\rm{times}}\, :\, M\, \longrightarrow\, M
$$
be the $n$--fold iteration of $F_M$; by $F^0_M$ we will
denote the identity morphism of $M$.

For any $c\, \in\, {\rm CH}^d(M)$, let
$$
[c]\,\in\, {\mathbb Z}
$$
be the degree of $c$ ($d\,=\, \dim M$).

Fix a very ample line bundle ${\mathcal O}_M(1)$ on $M$. The degree of a
torsionfree coherent sheaf $V$ on $M$ is defined to
be
$$
\text{degree}(V)\,:=\, [c_1(V)\cdot c_1({\mathcal O}_M(1))^{d-1}]\, 
\in\,
{\mathbb Z}\, .
$$
We note that
\begin{equation}\label{f1}
\text{degree}(V)\,=\, \text{degree}(V\vert_D)\, ,
\end{equation}
where $D$ is a smooth complete intersection curve obtained
by intersecting $(d-1)$ hyperplanes on $M$ from the complete linear 
system $\vert {\mathcal O}_M(1)\vert$.
Let $U\, \subset\, M$ be a Zariski open subset such that
the codimension of the complement $M\setminus U$ is at
least two. For a torsionfree coherent sheaf $E\,\longrightarrow\,
U$, define
$$
\text{degree}(E)\, =\, \text{degree}(\iota_*E)\, ,
$$
where $\iota\, :\, U\, \hookrightarrow\, M$ is the inclusion map;
note that $\iota_*E$ is a coherent sheaf.

A Zariski open subset $U\, \subset\, M$ such that
the codimension of the complement $M\setminus U$ is at
least two will be called a \textit{big open subset}.

Let $G$ be a connected semisimple linear algebraic
group defined over $\overline{{\mathbb F}_p}$.
Let $E_G$ be a principal $G$--bundle over $M$.
Let $(Q\, ,U\, , \sigma)$ be a triple, where
\begin{itemize}
\item $Q \,\subset\, G$ is a maximal
proper parabolic subgroup,

\item $U \,\subset\, M$ is a big open subset, and

\item $\sigma \,:\, U \, \longrightarrow\, E_G/Q$ is a
reduction of the structure group, over $U$, to the subgroup $Q$.
\end{itemize}
The principal $G$--bundle $E_G$ is called
\textit{semistable} if for all such triples,
the inequality
\begin{equation}\label{c2}
\text{degree}(\sigma^* T_{\text{rel}})\, \geq \, 0
\end{equation}
holds, where $T_{\text{rel}}\,
\longrightarrow\, E_G/Q$ is the
relative tangent bundle for the projection $E_G/Q \,
\longrightarrow\, M$. (See \cite{Ra}, \cite{RR}, \cite{RS}.)

A principal $G$--bundle $E_G\, \longrightarrow\, M$ is called
\textit{strongly semistable} if the principal $G$--bundle
$(F^n_M)^*E_G$ is semistable for all $n\, \geq\, 0$.

We recall that a character $\chi$ of a parabolic subgroup
$P\, \subset\, G$ is called \textit{strictly anti--dominant}
if the associated line bundle
\begin{equation}\label{l.b}
G(\chi)\,:=\, G\times^P \overline{{\mathbb F}_p}
\, \longrightarrow\, G/P
\end{equation}
is ample.

\section{Restriction of principal bundles to curves}

We start with a simple lemma.

\begin{lemma}\label{lem0}
Let $E_G\, \longrightarrow\, M$ be a
principal $G$--bundle such that
for every pair of the form $(Y\, ,\varphi)$,
where $Y$ is an irreducible smooth projective curve, and
$\varphi\,:\, Y\,\longrightarrow\, M$
is a morphism, the principal $G$--bundle
$\varphi^*E_G\, \longrightarrow\,Y$ is semistable.
Then $\varphi^*E_G$ and $E_G$ are strongly semistable.
\end{lemma}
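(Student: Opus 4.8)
The plan is to exploit the functoriality of the absolute Frobenius morphism. For any morphism $\varphi\,:\,Y\,\longrightarrow\,M$ from a smooth projective curve, the absolute Frobenius morphisms fit into a commuting square (the absolute Frobenius is the identity on underlying spaces and $p$--th power on functions), giving $F_M\circ\varphi\,=\,\varphi\circ F_Y$, and hence $F^n_M\circ\varphi\,=\,\varphi\circ F^n_Y$ for every $n\,\geq\,0$. This yields the identity
$$
\varphi^*(F^n_M)^*E_G\,=\,(F^n_M\circ\varphi)^*E_G\,=\,(\varphi\circ F^n_Y)^*E_G\,=\,(F^n_Y)^*(\varphi^*E_G)\, ,
$$
which drives the whole argument.

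First I would prove that $\varphi^*E_G$ is strongly semistable. Since $F^n_Y\,:\,Y\,\longrightarrow\,Y$ is a morphism of smooth projective curves, the composition $\varphi\circ F^n_Y\,:\,Y\,\longrightarrow\,M$ is again a morphism of the type allowed in the hypothesis, so $(\varphi\circ F^n_Y)^*E_G$ is semistable for every $n\,\geq\,0$. By the displayed identity this sheaf is precisely $(F^n_Y)^*(\varphi^*E_G)$; thus every Frobenius pullback of $\varphi^*E_G$ is semistable, and by definition $\varphi^*E_G$ is strongly semistable.

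Next I would show that $E_G$ itself is strongly semistable, i.e. that $(F^n_M)^*E_G$ is semistable for every $n$. Suppose not: there is a triple $(Q\, ,U\, ,\sigma)$ with $Q\,\subset\,G$ a maximal proper parabolic and $\sigma$ a reduction of $(F^n_M)^*E_G$ to $Q$ over a big open subset $U$, for which $\text{degree}(\sigma^*T_{\text{rel}})\,<\,0$. Let $D$ be a smooth irreducible complete intersection curve cut out by $(d-1)$ general members of $\vert{\mathcal O}_M(1)\vert$ (its irreducibility and smoothness follow from Bertini when $d\,\geq\,2$; the case $d\,=\,1$ is immediate from the previous paragraph applied to $\varphi\,=\,\mathrm{id}_M$). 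Because the complement $M\setminus U$ has codimension at least two, a general such $D$ lies entirely inside $U$, so $\sigma$ restricts to a genuine reduction $\sigma\vert_D$ of $(F^n_M)^*E_G\vert_D$ to $Q$ over all of $D$. Using \eqref{f1}, together with the compatibility of the relative tangent bundle and of pullback with restriction, I would obtain
$$
\text{degree}\big((\sigma\vert_D)^*T_{\text{rel}}\big)\,=\,\text{degree}(\sigma^*T_{\text{rel}})\,<\,0\, .
$$
On the other hand, applying the Frobenius identity with $\varphi\,=\,\iota_D\,:\,D\,\hookrightarrow\,M$ gives $(F^n_M)^*E_G\vert_D\,=\,(F^n_D)^*(E_G\vert_D)$; and $E_G\vert_D\,=\,\iota_D^*E_G$ is strongly semistable by the previous paragraph, so $(F^n_D)^*(E_G\vert_D)$ is semistable. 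The reduction $\sigma\vert_D$ then contradicts this semistability. Hence no destabilizing reduction exists and $E_G$ is strongly semistable.

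The main obstacle is the passage from a reduction defined only over a big open subset of $M$ to one over a complete intersection curve: I must verify that a general curve $D$ avoids the codimension $\geq 2$ complement $M\setminus U$ entirely, so that $\sigma$ restricts to a reduction over the whole of $D$, and that the degree of $\sigma^*T_{\text{rel}}$ is faithfully recovered on $D$ through \eqref{f1} and the identification of $(\sigma^*T_{\text{rel}})\vert_D$ with the corresponding relative-tangent pullback for the restricted bundle. Once the functoriality of the absolute Frobenius is established, the remaining steps are formal.
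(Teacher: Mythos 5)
Your proof is correct and follows essentially the same route as the paper: functoriality of the absolute Frobenius gives strong semistability of $\varphi^*E_G$, and restriction to a general complete intersection curve via \eqref{f1} gives strong semistability of $E_G$ itself. The paper compresses the second step into a one-line appeal to \eqref{f1}; you merely make explicit the restriction of a putative destabilizing reduction over a big open subset to a curve $D\subset U$.
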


\begin{proof}
Let $F_C$ be the absolute Frobenius morphism of $C$. Replacing
$\varphi$ by $\varphi\circ F^n_C$ in the given condition,
we conclude that $(\varphi\circ F^n_C)^*E \,=\, (F^n_C)^*(\varphi^*E)$
is semistable. Hence $\varphi^*E$ is strongly semistable. Now from
\eqref{f1} it follows that $E$ is strongly semistable.
\end{proof}

\begin{proposition}\label{prop1}
Let $E_G\, \longrightarrow\, M$ be a
principal $G$--bundle such that
for every pair of the form $(C\, ,\varphi)$,
where $C$ is an irreducible smooth projective curve, and
$$
\varphi\,:\, C\,\longrightarrow\, M
$$
is a morphism, the principal $G$--bundle
$\varphi^*E_G\, \longrightarrow\,C$ is semistable.
Then
$$
[c_2({\rm ad}(E_G))\cdot c_1({\mathcal O}_M(1))^{d-2}] \, =\, 0\, .
$$
\end{proposition}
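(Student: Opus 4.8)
The plan is to prove that the adjoint vector bundle ${\rm ad}(E_G)$ is numerically effective and then to apply the positivity of Schur classes of nef bundles. First I would use Lemma \ref{lem0}: for every morphism $\varphi\,:\, C\,\longrightarrow\, M$ the pullback $\varphi^*E_G$ is strongly semistable. By the theorem of Ramanan and Ramanathan that an associated bundle of a strongly semistable principal bundle is again strongly semistable, the vector bundle $\varphi^*{\rm ad}(E_G)\,=\,{\rm ad}(\varphi^*E_G)$ is strongly semistable, in particular semistable. Since $G$ is semisimple, the adjoint representation lands in the special linear group, so $c_1({\rm ad}(E_G))\,=\,0$ and $\varphi^*{\rm ad}(E_G)$ has degree zero on $C$. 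A semistable bundle of degree zero on a curve has every quotient of nonnegative degree; as this holds for all pairs $(C\, ,\varphi)$, it says precisely that $W\,:=\,{\rm ad}(E_G)$ is numerically effective.

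Next I would invoke the positivity theorem of Fulton and Lazarsfeld: for a numerically effective vector bundle the Schur polynomials in its Chern classes are numerically nonnegative. The two Schur classes of degree two are $c_2(W)$ and $c_1(W)^2-c_2(W)$. Intersecting each of them with $c_1({\mathcal O}_M(1))^{d-2}$, which is represented by a smooth complete intersection surface, and using $c_1(W)\,=\,0$, I obtain
$$
[c_2(W)\cdot c_1({\mathcal O}_M(1))^{d-2}]\,\geq\,0
\quad\text{and}\quad
-[c_2(W)\cdot c_1({\mathcal O}_M(1))^{d-2}]\,\geq\,0\, .
$$
Hence $[c_2({\rm ad}(E_G))\cdot c_1({\mathcal O}_M(1))^{d-2}]\,=\,0$, which is the assertion.

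The main obstacle is the numerical effectivity in the first step, and specifically the passage from $E_G$ to its adjoint bundle. In positive characteristic an associated bundle of a merely semistable principal bundle can fail to be semistable, so it is essential to upgrade the hypothesis to strong semistability by means of Lemma \ref{lem0} before passing to ${\rm ad}(E_G)$; this is exactly where the condition on all curves $(C\, ,\varphi)$, rather than a single semistability condition on $M$, is used. I would also observe that the Killing form gives an isomorphism $W\,\cong\, W^*$, so that $W$ is in fact numerically flat, which is the conceptual reason that the degree of $c_2$, and not merely that of $c_1$, is forced to vanish.
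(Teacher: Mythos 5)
Your proof is correct, but it takes a genuinely different route from the paper's. The paper does not work with ${\rm ad}(E_G)$ directly: it first proves the statement for the endomorphism bundle of a vector bundle, showing $End(E)$ is nef, restricting to a complete intersection surface to get $W$ with $c_1(W)=0$, and then obtaining the two inequalities from two different sources -- $[c_2(W)]\le 0$ from the Grothendieck relation on ${\mathbb P}(W)$ together with nefness of the tautological class (this is exactly your $s_{(2)}=c_1^2-c_2\ge 0$, phrased as nonnegativity of the top Segre class), and $[c_2(W)]\ge 0$ from the Bogomolov inequality for strongly semistable sheaves in positive characteristic (Langer). It then deduces the claim for ${\rm ad}(E_G)$ via the identity $c_2(End({\rm ad}(E_G)))=2m\cdot c_2({\rm ad}(E_G))$. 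You instead establish nefness of ${\rm ad}(E_G)$ itself (by the same quotient-of-a-degree-zero-semistable-bundle argument the paper uses for $End(E)$, with the same appeal to Ramanan--Ramanathan, which legitimately applies since the adjoint representation kills the center) and get \emph{both} inequalities from Fulton--Lazarsfeld positivity of the two degree-two Schur classes. What you buy is a shorter argument that bypasses Bogomolov--Langer and the $End$ d\'etour with its factor of $2m$; what it costs is reliance on the $c_2$-positivity half of Fulton--Lazarsfeld for nef bundles, which is a deeper input than the Segre-class half -- one must note that the Fulton--Lazarsfeld arguments are characteristic-free (cf.\ Fulton, \emph{Intersection Theory}, Ch.~12), since the ambient field here is $\overline{{\mathbb F}_p}$. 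Two small cautions: your closing remark that the Killing form gives $W\cong W^*$ is not valid in general in characteristic $p$ (the Killing form of a semisimple group can degenerate, e.g.\ for ${\rm SL}_p$); this is harmless since your main argument uses only nefness of $W$ and $c_1(W)=0$, and if you want numerical flatness you can get nefness of $W^*$ directly from semistability of $(\varphi^*W)^*$. Also, when checking nefness of $W$ you should dispose of the curves $\gamma: C\to {\mathbb P}(W)$ lying in a single fiber, where ${\mathcal O}(1)$ is ample and there is nothing to prove.
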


\begin{proof}
We will first prove this for vector bundles.
Let $E\, \longrightarrow\, M$ be a vector bundle
such that the pulled back vector bundle
$\varphi^*E\, \longrightarrow\, C$ is semistable
for every pair $(C\, ,\varphi)$ of the above type.

{}From Lemma \ref{lem0} we know that $\varphi^*E\, \longrightarrow
\, C$ is strongly semistable. Hence the endomorphism bundle
$End(\varphi^*E)\,=\, \varphi^*End(E)$ is semistable
\cite[p. 288, Theorem 3.23]{RR}. From this it follows that
$End(E)$ is numerically effective. To prove this, take any
morphism
$$
\gamma\, :\, C\, \longrightarrow\, {\mathbb P}(End(E))
$$
and set $\varphi\,=\, h\circ\gamma$, where $h:
{\mathbb P}(End(E))\longrightarrow M$ is the natural projection.
Note that $\gamma^*{\mathcal O}_{{\mathbb P}(End(E))}(1)$
is a quotient of $\varphi^*End(E)$. Since $\varphi^*End(E)$ is 
semistable of degree zero, we have
$$
\text{degree}(\gamma^*{\mathcal O}_{{\mathbb 
P}(End(E))}(1))\, \geq\, 0\, ,
$$
proving that $End(E)$ is numerically effective.

If $M$ is a curve then there is nothing to prove.
Take a smooth complete intersection surface
\begin{equation}\label{liota}
\iota\, :\, S\, \hookrightarrow\, M
\end{equation}
obtained by intersecting $(d-2)$ hyperplanes on $M$ from the complete 
linear system $\vert {\mathcal O}_M(1)\vert$. If $M$ is a surface,
then take $S\,=\, M$. Let
$$
W\, :=\, \iota^*End(E)
$$
be the restriction of $End(E)$ to $S$. We note that
\begin{equation}\label{l5}
[c_2(End(E))\cdot c_1({\mathcal O}_M(1))^{d-2}]\,=\, [c_2(W)]
\,\in\, {\mathbb Z}\, .
\end{equation}
Also, note that $c_1(W)\, =\, 0$ because $W\, =\, W^*$.

Substituting $W$ for $E$ in Lemma \ref{lem0} we conclude that
$W$ is strongly semistable.
Therefore, we have the Bogomolov inequality for $W$
\begin{equation}\label{eqbo}
[c_2(W)] \, \geq\, 0
\end{equation}
(see \cite{Bo}, \cite{La}); recall that $c_1(W)\,=\, 0$.

Let
\begin{equation}\label{eqf1}
f\, :\, Z\, :=\, {\mathbb P}(\iota^*End(E))
\, \longrightarrow\, S
\end{equation}
be the projective bundle parametrizing the hyperplanes
in the fibers of $\iota^*End(E)\, =\, W$.
The tautological line bundle 
${\mathcal O}_Z(1)$ over $Z$ will be denoted by $L$.

The Grothendieck's construction of Chern classes gives
$$
[f^*c_2(W)\cdot c_1(L)^{r-2}] - [f^*c_1(W)\cdot c_1(L)^{r-1}] +
[c_1(L)^{r}]\, =\, 0
$$
(see \cite[page 429]{Ha2}), where $r\,=\, \text{rank}(W)$,
and $f$ is the projection in \eqref{eqf1} to the surface $S$.
Therefore,
\begin{equation}\label{eqf3}
[f^*c_2(W)\cdot c_1(L)^{r-2}] \,+\, [c_1(L)^{r}] \,=\, 0\, ,
\end{equation}
because $c_1(W)\,=\,0$.

The line bundle $L\, \longrightarrow \,{\mathbb P}(W)$ is numerically 
effective because it is a restriction of the numerically effective
line bundle ${\mathcal O}_{{\mathbb P}(End(E))}(1)$.
Consequently, $[c_1(L)^{r}]\, \geq\, 0$.

Since $[c_1(L)^{r}]\, \geq\, 0$, from \eqref{eqf3}
we conclude that $[c_2(W)] \, \leq\, 0$. Comparing this
with \eqref{eqbo} we conclude that
$[c_2(W)] \, =\, 0$.
Hence from \eqref{l5} we conclude that
\begin{equation}\label{l.c2}
[c_2(End(E))\cdot c_1({\mathcal O}_M(1))^{d-2}] \, =\, 0\, .
\end{equation}

Now we consider the general case of principal $G$--bundles.
Let $E_G$ be as in the statement of the proposition.

Let $m$ be the dimension of $G$. Let ${\rm ad}(E_G)$ be the
adjoint vector bundle. The line bundle
$\bigwedge^m {\rm ad}(E_G)$ is trivial because the adjoint
action of $G$ on $\bigwedge^m \text{Lie}(G)$ is trivial.
In particular, $c_1({\rm ad}(E_G))\,=\, 0$, and hence
\begin{equation}\label{ch1}
c_2(End({\rm ad}(E_G)))\,=\, 2m\cdot c_2({\rm ad}(E_G))\, .
\end{equation}

Since $\varphi^*E_G$ is strongly semistable (Lemma \ref{lem0}),
the adjoint vector bundle $\text{ad}(\varphi^*E_G)\,=\,
\varphi^*\text{ad}(E_G)$ is semistable
\cite[p. 288, Theorem 3.23]{RR}. Hence from \eqref{l.c2}
we know that
$$
[c_2(End({\rm ad}(E_G)))\cdot c_1({\mathcal O}_M(1))^{d-2}] \, =\, 0\, .
$$
Therefore, \eqref{ch1} implies that $[c_2({\rm ad}(E_G))
\cdot c_1({\mathcal O}_M(1))^{d-2}] \, =\, 0$.
This completes the proof of the proposition.
\end{proof}

\section{Fundamental group scheme and principal bundles}

Fix a base point $x_0\, \in\, M$. Let
$\varpi(M,\, x_0)$ be the fundamental group scheme
\cite{No1}, \cite{No2}. There is a universal principal
$\varpi(M,\, x_0)$--bundle
$$
F_{\varpi(M, x_0)}\, \longrightarrow\, M\, .
$$
Given a homomorphism $\rho\, :\, \varpi(M,\, x_0)\, \longrightarrow
\, G$, we get a principal $G$--bundle $F_{\varpi(M, x_0)}(G)$
over $M$ by extending the structure group of $F_{\varpi(M, x_0)}$
using $\rho$.

Fix a parabolic subgroup $P\, \subsetneq\, G$. Fix a strictly
anti--dominant character $\chi$ of $P$.

Given a principal $G$--bundle $E_G\, \longrightarrow\, M$,
the quotient map $E_G\, \longrightarrow\, E_G/P$ defines
a principal $P$--bundle. Let $E_G(\chi)\, \longrightarrow\,
E_G/P$ be the line bundle associated to this principal $P$--bundle
for the character $\chi$.

\begin{theorem}\label{thm1}
Let $E_G$ be a principal $G$--bundle over $M$
such that the line bundle $E_G(\chi)$ over $E_G/P$ is
numerically effective. Then $E_G$ is given by a homomorphism
$\varpi(M, x_0)\,\longrightarrow\, G$.
\end{theorem}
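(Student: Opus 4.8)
The plan is to reduce the statement to a semistability assertion on curves, feed that into Proposition \ref{prop1}, and then invoke the known characterization of bundles coming from $\varpi(M,x_0)$ over $\overline{\mathbb{F}_p}$. First I would observe that numerical effectivity is preserved under pullback: for any pair $(C,\varphi)$ with $\varphi\colon C\to M$ a morphism from an irreducible smooth projective curve, the natural morphism $(\varphi^*E_G)/P\to E_G/P$ lying over $\varphi$ pulls $E_G(\chi)$ back to $(\varphi^*E_G)(\chi)$, by functoriality of the associated line bundle construction and the identification $(\varphi^*E_G)/P\,=\,C\times_M(E_G/P)$. Hence $(\varphi^*E_G)(\chi)$ is again numerically effective on $(\varphi^*E_G)/P$, and it suffices to prove the following curve statement: if $F_G$ is a principal $G$--bundle on a smooth projective curve $C$ and $F_G(\chi)$ is nef on $F_G/P$, then $F_G$ is semistable. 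Granting this, every $\varphi^*E_G$ is semistable, so Lemma \ref{lem0} shows $E_G$ is strongly semistable, and Proposition \ref{prop1} gives $[c_2(\mathrm{ad}(E_G))\cdot c_1(\mathcal{O}_M(1))^{d-2}]\,=\,0$.

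The heart of the argument, and the step I expect to be the main obstacle, is the curve statement, which I would prove by contradiction. If $F_G$ is not semistable, the theory of unstable bundles on a curve (the canonical, or Harder--Narasimhan, reduction; see \cite{Ra}, \cite{RR}) produces a parabolic $Q\subset G$, a reduction of structure group $\sigma_Q\colon C\to F_G/Q$, and a destabilizing one--parameter subgroup $\lambda$ with $Q\,=\,P(\lambda)$, for which the associated degree is strictly positive in the direction of the dominant characters of $Q$. The task is to convert this $Q$--destabilization into a curve in $F_G/P$ on which $F_G(\chi)$ has negative degree. For this I would use the limit point $z_\lambda\,:=\,\lim_{t\to 0}\lambda(t)\cdot(eP)\,\in\, G/P$: since $\lambda$ has nonnegative weights on the relevant tangent directions, $z_\lambda$ is fixed by $Q$, so its stabilizer is a parabolic containing $Q$ and there is a $G$--equivariant morphism $G/Q\to G/P$, $gQ\mapsto g\cdot z_\lambda$. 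Composing $\sigma_Q$ with the induced morphism $F_G/Q\to F_G/P$ then yields a reduction $\sigma_P\colon C\to F_G/P$.

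It remains to compute the sign of $\mathrm{degree}(\sigma_P^*F_G(\chi))$. Along $\sigma_P$ the character $\chi$ is pulled back to the value it takes at $z_\lambda$, that is, to the extreme $\lambda$--weight $\langle\chi,\lambda\rangle$ of $\chi$ on $G/P$; because $\chi$ is strictly anti--dominant, the line bundle $G(\chi)$ is ample on $G/P$, and this forces $\langle\chi,\lambda\rangle$ to have the sign making $\mathrm{degree}(\sigma_P^*F_G(\chi))<0$ whenever $\lambda$ is destabilizing. This is precisely the Hilbert--Mumford--type numerical computation that I expect to require the most care: one must align Ramanathan's sign conventions for the destabilizing $\lambda$ with the ampleness (strict anti--dominance) of $\chi$, and check that the \emph{strictness} of $\chi$ (its regularity relative to $P$) makes the pairing strictly nonzero rather than merely nonpositive. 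A reduction $\sigma_P$ of negative degree contradicts the numerical effectivity of $F_G(\chi)$, so $F_G$ must be semistable.

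Finally, with $E_G$ strongly semistable and $[c_2(\mathrm{ad}(E_G))\cdot c_1(\mathcal{O}_M(1))^{d-2}]\,=\,0$ in hand, I would appeal to the characterization of principal bundles arising from the fundamental group scheme over $\overline{\mathbb{F}_p}$: a strongly semistable $G$--bundle whose adjoint bundle has vanishing second Chern class in this intersection--theoretic sense is given by a homomorphism $\varpi(M,x_0)\to G$. Concretely, the vanishing Chern data makes $\mathrm{ad}(E_G)$, and hence $E_G$, numerically flat, and over the algebraic closure of a finite field numerically flat bundles are essentially finite, i.e.\ come from $\varpi(M,x_0)$ (cf.\ \cite{No1}, \cite{No2}). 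This produces the desired homomorphism $\varpi(M,x_0)\to G$ and completes the proof.
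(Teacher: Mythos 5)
Your overall architecture coincides with the paper's: pull back along every $\varphi\colon C\to M$, use the identification $(\varphi^*E_G)/P=C\times_M(E_G/P)$ and the fact that nefness is preserved under pullback to reduce to a curve-level criterion ``$F_G(\chi)$ nef $\Rightarrow$ $F_G$ semistable,'' then apply Lemma \ref{lem0} and Proposition \ref{prop1} to get strong semistability together with $[c_2(\mathrm{ad}(E_G))\cdot c_1(\mathcal{O}_M(1))^{d-2}]=0$, and finally invoke the characterization of bundles arising from $\varpi(M,x_0)$ over $\overline{\mathbb{F}_p}$ (\cite{Bi}, \cite{Su2}). The one place you diverge is that the paper simply cites \cite[p.~766, Theorem 3.1]{BP} for the curve-level criterion, whereas you attempt to prove it, and your argument for it has a genuine gap.

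The gap is in the construction of the reduction $\sigma_P$. You take the Harder--Narasimhan parabolic $Q=P(\lambda)$ and claim that $z_\lambda=\lim_{t\to 0}\lambda(t)\cdot(eP)$ is fixed by $Q$, so that $gQ\mapsto g\cdot z_\lambda$ defines a $G$--equivariant morphism $G/Q\to G/P$. This fails in general: such a morphism exists only when some conjugate of $P$ contains $Q$, and the HN parabolic need not fix any point of $G/P$. For instance, with $G=\mathrm{SL}_3$, $P$ the stabilizer of a line (so $G/P=\mathbb{P}^2$) and $Q$ the stabilizer of a plane, $Q$ has no fixed point in $\mathbb{P}^2$; concretely, for $\lambda(t)=\mathrm{diag}(t,t,t^{-2})$ one gets $Q=P(\lambda)=\{g\,:\,g_{31}=g_{32}=0\}$ and $z_\lambda=[1:0:0]$, which is moved by elements of $Q$ with $g_{21}\neq 0$. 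The point $z_\lambda$ is fixed by $\lambda$ and its attracting cell is $P(\lambda)$--stable, but that does not make $z_\lambda$ itself $P(\lambda)$--fixed. The standard repair is to reduce further from $Q$ to a Borel $B\subset Q$ over the curve (possible over a smooth projective curve since the fibers of $F_Q/B\to C$ are proper rational homogeneous spaces, so a rational section extends), use the Borel fixed point theorem to get a $B$--fixed point of $G/P$, and then carry out the weight computation with a carefully chosen fixed point so that the HN positivity forces $\mathrm{degree}(\sigma_P^*F_G(\chi))<0$ -- the sign verification you flag as delicate is indeed where the choice of fixed point matters. Alternatively, and as the paper does, one can simply quote \cite{BP} for this implication, which makes the rest of your proof go through verbatim.
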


\begin{proof}
Take any pair $(C\, ,\theta)$, where $C$ is an irreducible
smooth projective curve, and
$$
\theta\,:\, C\,\longrightarrow\, M
$$
is a morphism. Consider the fiber bundle
$$
E_G/P\,\longrightarrow \,M\, .
$$
Note that $\theta^*(E_G/P)\,=\,(\theta^*E_G)/P$,
and the pullback of the line bundle
$E_G(\chi)$ to $\theta^*(E_G/P)$ coincides
with the line bundle $(\theta^*E_G)(\chi)$ associated to the
principal $P$--bundle $\theta^*E_G\, \longrightarrow\,
(\theta^*E_G)/P$ for the character $\chi$. Since $E_G(\chi)$ is
numerically effective, and the pullback of a numerically effective
line bundle is numerically effective, we conclude that the line bundle
$$
(\theta^*E_G)(\chi)\, \longrightarrow\, (\theta^*E_G)/P
$$
is numerically effective. This implies that the principal
$G$--bundle $\theta^*E_G$ is semistable \cite[p. 766, Theorem 3.1]{BP}.

Therefore, from Lemma \ref{lem0} and
Proposition \ref{prop1} we conclude that $E_G$ is strongly semistable,
and
$$
[c_2({\rm ad}(E_G))\cdot c_1({\mathcal O}_M(1))^{d-2}] \, =\, 0\, .
$$
Hence $E_G$ is a given by a homomorphism
$\varpi(M,\, x_0)\, \longrightarrow\, G$
\cite[pp. 210--211, Theorem 1.1]{Bi} (for vector
bundles this was proved earlier in \cite{Su2}). This completes the
proof of the theorem.
\end{proof}

\begin{proposition}\label{prop3}
There is no principal $G$--bundle $E_G$ over $M$
such that the line bundle $E_G(\chi)\, \longrightarrow\,
E_G/P$ has the following property: for every pair of the
form $(Y\, ,\varphi)$,
where $Y$ is an irreducible smooth projective curve, and
$\varphi\,:\, Y\,\longrightarrow\, E_G/P$
is a nonconstant morphism, the inequality
${\rm degree}(\varphi^*E_G(\chi)) \, >\, 0$ holds.
\end{proposition}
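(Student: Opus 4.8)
The plan is to argue by contradiction. Suppose a principal $G$--bundle $E_G$ over $M$ with the stated strict positivity property exists. Since $\mathrm{degree}(\varphi^*E_G(\chi))\,>\,0$ for every nonconstant morphism $\varphi\,:\,Y\,\longrightarrow\, E_G/P$ from a smooth projective curve, in particular the degree is nonnegative on the normalization of every irreducible curve in $E_G/P$; hence the line bundle $E_G(\chi)$ is numerically effective. Theorem \ref{thm1} then applies and produces a homomorphism $\rho\,:\,\varpi(M,x_0)\,\longrightarrow\, G$ with $E_G\,\cong\, F_{\varpi(M,x_0)}(G)$. I would then derive a contradiction by exhibiting a single nonconstant curve in $E_G/P$ on which $E_G(\chi)$ has degree zero.

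The next step is to pass to a trivializing cover. The group scheme $\varpi(M,x_0)$ is a projective limit of finite group schemes, and $G$ is of finite type, so $\rho$ factors as $\varpi(M,x_0)\,\longrightarrow\, N\,\stackrel{\bar\rho}{\longrightarrow}\, G$ through a finite group scheme quotient $N$ (the coordinate ring of $G$ is finitely generated, so the image of $\rho^*$ lands in a finite stage). Let $\pi\,:\,\mathcal P\,\longrightarrow\, M$ be the associated $N$--torsor, so that $E_G\,\cong\,\mathcal P\times^N G$. Because the pullback of a torsor to its own total space is trivial, $\pi^*E_G$ is the trivial $G$--bundle; consequently $\pi^*(E_G/P)\,\cong\,\mathcal P\times(G/P)$, and under this identification $\pi^*E_G(\chi)$ is the pullback $\mathrm{pr}_{G/P}^*\,G(\chi)$ of the ample line bundle on $G/P$ defined in \eqref{l.b}.

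Now I would manufacture the contradicting curve. Since $\pi$ is finite and $M$ is projective, $\mathcal P$ is projective of dimension $\geq 1$; choose a curve $\widetilde C\,\subset\,\mathcal P$ (for instance the normalization of a complete intersection curve), which $\pi$ maps nonconstantly to $M$ as $\pi$ has finite fibers. Fix a closed point $y\,\in\, G/P$ and let $s\,:\,\mathcal P\,\longrightarrow\,\mathcal P\times(G/P)\,=\,\pi^*(E_G/P)$ be the section $z\,\mapsto\,(z,y)$. Taking $\varphi$ to be the composite $\widetilde C\,\hookrightarrow\,\mathcal P\,\stackrel{s}{\longrightarrow}\,\pi^*(E_G/P)\,\longrightarrow\, E_G/P$, the projection of $\varphi$ to $M$ equals $\pi\vert_{\widetilde C}$, so $\varphi$ is nonconstant; but $\varphi^*E_G(\chi)\,=\,(\mathrm{pr}_{G/P}\circ s\vert_{\widetilde C})^*G(\chi)$ is the pullback of $G(\chi)$ along the constant map $z\,\mapsto\, y$, hence trivial, so $\mathrm{degree}(\varphi^*E_G(\chi))\,=\,0$. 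This contradicts the strict positivity hypothesis and completes the argument.

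The main obstacle is the passage to the trivializing cover: one must verify that $\rho$ genuinely factors through a finite group scheme $N$ and that the resulting torsor $\mathcal P$ trivializes $E_G$, while keeping the argument valid when $N$ is infinitesimal, in which case $\mathcal P$ is non--reduced and one works instead with $\mathcal P_{\mathrm{red}}$ or with the normalization of a curve inside $\mathcal P$. The line bundle computation is insensitive to this subtlety, but it requires care. Everything after the trivialization is a short and formal computation.
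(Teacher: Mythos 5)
Your proof is correct, and its overall architecture is the same as the paper's: deduce nefness, invoke Theorem \ref{thm1} to get $\rho\,:\,\varpi(M,x_0)\,\longrightarrow\, G$, produce a finite cover of $M$ on which $E_G$ trivializes, and then push a constant section of the $G/P$--factor down to $E_G/P$ to exhibit a nonconstant curve on which $E_G(\chi)$ has degree zero. (Your explicit verification that the strict positivity hypothesis implies nefness, via normalizations of irreducible curves, is a step the paper leaves implicit.) Where you genuinely diverge is in how the trivializing cover is obtained. The paper fixes a faithful representation $\eta\,:\, G\,\hookrightarrow\,\mathrm{GL}(V_0)$, observes that the associated vector bundle $E_G(V_0)$ is essentially finite, invokes \cite{BH} to get a finite morphism $f\,:\,\widetilde{M}\,\longrightarrow\, M$ from an \emph{irreducible smooth projective} variety trivializing $f^*E_G(V_0)$, and then uses affineness of $\mathrm{GL}(V_0)/G$ to upgrade this to a trivialization of the principal $G$--bundle $f^*E_G$ itself. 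You instead factor $\rho$ through a finite group scheme quotient $N$ (correct: $k[G]$ is finitely generated, so $\rho^*$ lands in a finite stage of the filtered colimit, and its image is a finite Hopf subalgebra) and pull back to the total space $\mathcal P$ of the resulting $N$--torsor, where the bundle is canonically trivial. Your route is more self-contained --- it avoids the faithful representation, the citation of \cite{BH}, and the affine-quotient argument --- but it pays for this by losing control of the geometry of the cover: $\mathcal P$ may be non-reduced (when $N$ has an infinitesimal part) and singular, so the final curve must be manufactured by normalizing a curve in $\mathcal P_{\mathrm{red}}$ on which $\pi$ is nonconstant. You flag exactly this subtlety and resolve it correctly; since the degree computation only involves the pullback of $G(\chi)$ of \eqref{l.b} along a constant map to $G/P$, it is indeed insensitive to the non-reducedness. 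The paper's choice of a smooth irreducible $\widetilde{M}$ simply makes this last step frictionless. Both arguments (and the paper's) tacitly use $\dim M\,\geq\,1$ to guarantee the existence of such a curve, which is harmless since the statement is false for $M$ a point.
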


\begin{proof}
Let $E_G\, \longrightarrow\, M$ be a principal $G$--bundle
such that
\begin{equation}\label{ch}
{\rm degree}(\varphi^*E_G(\chi)) \, >\, 0
\end{equation}
for every pair $(Y\, ,\varphi)$ of the above type. From
Theorem \ref{thm1} we know that $E_G$ is given by
a homomorphism
\begin{equation}\label{rho}
\rho\, :\, \varpi(M,\, x_0)\, \longrightarrow\, G
\end{equation}

Fix a faithful representation
$$
\eta\, :\, G\, \hookrightarrow\, \text{GL}(V_0)\, .
$$
Let $E_G(V_0)\,:=\, E_G\times^G V_0\, \longrightarrow\, M$
be the associated vector bundle. Since $E_G$ is given by
the homomorphism $\rho$ in \eqref{rho}, the vector bundle
$E_G(V_0)$ is given by the homomorphism
$$
\eta\circ\rho\, :\, \varpi(M,\, x_0)\, \longrightarrow\,
\text{GL}(V_0)\, .
$$
In particular, $E_G(V_0)$ is an essentially finite vector
bundle \cite{No1}, \cite{No2}. Therefore, there is a finite
morphism
$$
f\, :\, \widetilde{M}\, \longrightarrow\, M\, ,
$$
where $\widetilde{M}$ is an irreducible smooth projective
variety of dimension $d$, such that the pulled back vector
bundle $f^*E_G(V_0)$ is trivial \cite[p. 557]{BH}.

Since $f^*E_G(V_0)$ is trivial, the principal $\text{GL}(V_0)$--bundle
$(f^*E_G)\times^G \text{GL}(V_0)\, \longrightarrow\, \widetilde{M}$,
which is the extension of structure group of $f^*E_G$ by $\eta$,
is trivial. So the reduction of structure group
$$
f^*E_G\, \hookrightarrow\, (f^*E_G)\times^G \text{GL}(V_0)\,=\,
\widetilde{M}\times \text{GL}(V_0)
$$
is given by a morphism $\widetilde{M}\, \longrightarrow\,
\text{GL}(V_0)/G$. Since $G$ is semisimple, the quotient
$\text{GL}(V_0)/G$ is an affine variety, hence there is no
nonconstant morphism to it from $\widetilde{M}$. Therefore,
the principal $G$--bundle $f^*E_G$ is trivializable. Fix
a trivialization of it.

Fixing a point $z_0$ of $G/P$, we construct the constant section
passing through $z_0$
$$
\sigma\, :\, \widetilde{M}\, \longrightarrow\, (f^*E_G)/P\,=\,
\widetilde{M}\times (G/P)
$$
of the projection $(f^*E_G)/P\,\longrightarrow\, \widetilde{M}$.
The composition $\beta\circ\sigma\, :\, \widetilde{M}\, 
\longrightarrow\, E_G/P$, where
$$
\beta\, :\, (f^*E_G)/P \,=\, f^*(E_G/P)\,
\longrightarrow\, E_G/P
$$
is the natural map, has the property that the line bundle
$$
(\beta\circ\sigma)^*E_G(\chi)\, \longrightarrow\,
\widetilde{M}
$$
is trivial. Indeed, $\beta^*E_G(\chi)$ is the pullback
of the line bundle $G(\chi)$ (defined in \eqref{l.b})
to $(f^*E_G)/P\,=\, \widetilde{M}\times (G/P)$
by the natural projection. Since $(\beta\circ\sigma)^*E_G(\chi)$
is trivial, the assumption that $E_G(\chi)$ satisfies the
condition in \eqref{ch} is contradicted.
This completes the proof of the proposition.
\end{proof}


\end{document}